\documentclass[12pt]{article}
\usepackage{amsmath, amsfonts, amssymb, latexsym}
\usepackage{amsthm}

\newcommand{\C}{\mathcal C}
\newcommand{\F}{\mathcal F}
\newcommand{\df}{d.f$.$}

\renewcommand{\d}{\textnormal{d}}

\numberwithin{equation}{section}

\begin{document}

\newtheorem{thm}{Theorem}[section]
\newtheorem{lem}{Lemma}[section]
\newtheorem{cor}{Corollary}[section]
\newtheorem{prop}{Proposition}[section]

\newtheorem{defn}{Definition}[section]
\newtheorem{exmp}{Example}[section]
\newtheorem{rem}{Remark}[section]

\title{Copulas: compatibility and Fr\'echet classes}
\author{Fabrizio Durante$^{1}$,
Erich Peter Klement$^{1}$, \\
Jos\'e Juan Quesada-Molina$^{2}$}
\date{}

\maketitle

\begin{center}
$^{1}$ Department of Knowledge-Based Mathematical Systems \\
Johannes Kepler University, A-4040 Linz, Austria \\
e-mails: fabrizio.durante@jku.at, ep.klement@jku.at\\
\end{center}

\begin{center}
$^{2}$ Departamento de Matem\'atica Aplicada\\
Universidad de Granada, E-18071 Granada, Spain\\
e-mail: jquesada@ugr.es\end{center}

\medskip

\begin{abstract}
\noindent We determine under which conditions three bivariate copulas $C_{12}$, $C_{13}$ and $C_{23}$ are compatible, 
viz. they are the bivariate marginals of the same trivariate copula $\widetilde C$, and, then, construct the class of 
these copulas. In particular, the upper and lower bounds for this class of trivariate copulas are determined. 

\medskip
\noindent\textbf{Keywords}: Fr\'echet class, Fr\'echet bounds, Copula, Compatibility.

\noindent\textbf{2000 Mathematics Subject Classification}: 62H05; 60E05.
\end{abstract}
 
\section{Introduction}\label{S:intro}

For many years, a problem of interest to statisticians has been the construction of multivariate distribution 
functions (briefly, \df's) with given univariate marginals and some useful properties such as a simple analytic
expression and a statistical interpretation. 

One of the possible extensions of this problem is to construct 
$n$--dimensio\-nal \df's with $k$ given $m$--dimensional marginals, $1\le m<n$ and $1\le k\le \binom{n}{m}$. 
For example, given two bivariate \df's $F_{12}$ and $F_{23}$, one may wish to construct, if they exist, 
trivariate \df's $F$ such that $F_{12}$ and $F_{23}$ are, respectively, the \df's of the first two and the last two 
components of the random triplet associated with $F$; the class of such functions $F$ is called \textit{Fr\'echet class} of
$F_{12}$ and $F_{23}$. An even harder problem is to construct such an $F$ when $F_{13}$ is also given, viz. when 
the \df\ of the first and the last component of that random triplet 
is also known. In such cases, and in all the cases when the marginals are \textit{overlapping}, the main problem is 
to determine a priori whether the given marginals are \textit{compatible}, viz. they can be derived from a 
common joint distribution. 

To the best of our knowledge, first results on the compatibility of three bivariate \df's
and on the corresponding Fr\'echet class were given by G. Dall'Aglio (1959) (compare also with (Dall'Aglio, 1972)),
and L. R\"uschendorf (1991a,1991b). In section 3 of the book by Joe (1997), the author studied in detail this case and some 
of its possible extensions to higher dimensions. 

In this paper, we aim at re-considering the foregoing problem in the class of \df's whose one-dimensional marginals 
are uniformly distributed on $[0,1]$: such \df's are called \textit{copulas}: see (Joe, 1997) and (Nelsen, 2006). 
This restriction does not cause any loss of generality in the problem because, thanks to \textit{Sklar's Theorem} 
(see (Sklar, 1959)), any multivariate \df\ can be represented by means of a copula and its one-dimensional marginals, and 
this representation is unique when the \df\ is continuous. Specifically, our goals are to:
\begin{itemize}
\item[(i)] determine under which conditions three  bivariate copulas $C_{12}$, $C_{13}$ and $C_{23}$ are \textit{compatible}, 
viz. they are the bivariate marginals of some trivariate copula $\widetilde C$;
\item[(ii)] construct the class of all trivariate copulas $\widetilde C$ with given bivariate marginals $C_{12}$, $C_{13}$ and $C_{23}$,
called the \textit{Fr\'echet class} of $C_{12}$, $C_{13}$ and $C_{23}$.
\end{itemize}
The main advantage of this approach completely based on copulas consists in the fact that 
it originates more intuitive and constructive procedures than in the previous literature (see the methods 
presented in section \ref{S:constr} and Theorem \ref{T:2fixcomp}), which permit easily to improve some 
known bounds (see Theorem \ref{T:bounds3cop}). 

We would like to stress that the above problems have a great interest in the development of copula theory, as 
underlined for example by Schweizer and Sklar (1983). Moreover, we also expect consequences 
in statistical applications, mainly when one wants to build a stochastic model from some knowledge about the kind of 
dependence exhibited by the involved random variables, and knows exactly certain marginal distributions. For example, 
constructions of \df's with given marginals are of relevance for the modelling of multivariate portfolio and bounding 
functions of dependent risks, such as the value at risk, the expected eccess of loss and other financial derivatives 
and risk measures (see (R\"uschendorf, 2004) and (McNeil et al., 2005)).

In Section \ref{S:first} we give some basic definitions, and then, we consider two constructions of copulas that 
will be useful in the sequel (Section \ref{S:constr}). In Section \ref{S:Comp} we present the characterization 
of the compatibility of three bivariate copulas, and we study the class of all trivariate copulas with given 
bivariate marginals (Section \ref{S:fre}). 

\section{Preliminaries}\label{S:first}

Let $n$ be in $\mathbb{N}$, $n\ge 2$, and denote by $\mathbf x=(x_1,\dots,x_n)$ any point in $\mathbb{R}^n$. An 
\textit{$n$--dimensional copula} (shortly, \textit{$n$--copula}) is a mapping $C_n:[0,1]^n\to [0,1]$ satisfying 
the following conditions:
\begin{itemize}
\item[(C1)] $C_n(\mathbf u)=0$ whenever $\mathbf u\in [0,1]^n$ has at least one component equal to $0$;
\item[(C2)] $C_n(\mathbf u)=u_i$ whenever $\mathbf u\in [0,1]^n$ has all components equal to $1$ except 
the $i$--th one, which is equal to $u_i$;
\item[(C3)] $C_n$ is \textit{$n$--increasing}, viz., for each $n$--box $B=\times_{i=1}^{n}[u_i,v_i]$ in $[0,1]^n$ 
with $u_i\le v_i$ for each $i\in \{1,\dots,n\}$,
\begin{equation}\label{E:n-in}
V_{C_n}\left( B\right):=\sum_{\mathbf z\in \times_{i=1}^{n}\{u_i,v_i\}} (-1)^{N(\mathbf z)} C_n(\mathbf z)\ge 0,
\end{equation}
where $N(\mathbf z)=card\{k\mid z_k=u_k\}$.
\end{itemize}

We denote by $\C_n$ the set of all $n$--dimensional copulas $(n\ge 2)$. For every $C_n\in \C_n$ and for 
every $\mathbf u\in [0,1]^n$, we have that

\begin{equation}\label{E:frechet}
W_n(\mathbf u)\le C_n(\mathbf u) \le M_n(\mathbf u),
\end{equation}
where
\begin{equation*}
W_n(\mathbf u):=\max\left\{\sum_{i=1}^{n}u_i-n+1,0\right\},\qquad
M_n(\mathbf u):=\min\{u_1,u_2,\dots,u_n\}.
\end{equation*}
Notice that $M_n$ is in $\C_n$, but $W_n$ is in $\C_n$ only for $n=2$. 
Another important $n$--copula is the product $\Pi_n(\mathbf u):=\prod_{i=1}^{n}u_i$. 

We recall that, for $C$ and $C'$ in $\C_2$, $C'$ is said to be greater than $C$ in the
\textit{concordance order}, and we write $C\preceq C'$, if $C(u_1,u_2)\le C'(u_1,u_2)$ for
all $(u_1,u_2)\in [0,1]^2$. Moreover, for $D$ and $D'$ in $\C_3$, $D'$ is said to be greater than $D$ in the
\textit{concordance order}, and we write $D\preceq D'$, if $D(\mathbf u)\le D'(\mathbf u)$ and 
$\overline{D}(\mathbf u)\le \overline{D'}(\mathbf u)$ for all $\mathbf u\in [0,1]^3$, 
where $\overline D$ is the survival copula of $D$ defined on $[0,1]^3$ by
\begin{eqnarray*}
\overline{D}(u_1,u_2,u_3)=&&1-u_1-u_2-u_3+D(u_1,u_2,1)+D(u_1,1,u_3)\\
&&+D(1,u_2,u_3)-D(u_1,u_2,u_3).
\end{eqnarray*}

For more details about copulas, see (Joe, 1997) and (Nelsen, 2006). 

Notice that, for each $C_n$ in $\C_n$, there exist a probability space $(\Omega,\mathcal A,P)$ and
a random vector $\mathbf U=(U_1,U_2,\dots,U_n)$, $U_i$ uniformly distributed on $[0,1]$ for 
every $i\in \{1,2,\dots,n\}$, such that $C_n$ is the \df\ of $\mathbf U$ (see (Billingsley, 1995)). 
As a consequence, for each $C_n\in \C_n$ and for each permutation $\mathbf{\sigma}=(\sigma_1,\dots,\sigma_n)$ of
$(1,2,\dots,n)$, the mapping $C_{n}^{\sigma}:[0,1]^n\to [0,1]$ given by
\[
C_{n}^{\sigma}(u_1,\dots,u_n)=C_n(u_{\sigma_1},\dots,u_{\sigma_n})
\]
is also in $C_n$. For example, if $C_3\in \C_3$, then the mapping $C_{3}^{(1,3,2)}$ given by
\[
C_{3}^{(1,3,2)}(u_1,u_2,u_3)=C_3(u_1,u_3,u_2)
\]
is also in $\C_3$. In particular, for each $C_2\in \C_2$, we write $C_{2}^{(2,1)}=C_{2}^{t}$, which is called
the \textit{transpose} of $C_2$. 

\begin{defn}
Let $C_{12}$, $C_{13}$ and $C_{23}$ be in $\C_{2}$. $C_{12}$, $C_{13}$ and $C_{23}$ are said to be \textit{compatible} 
if, and only if, there exists $\widetilde C\in \C_3$ such that, for all $u_1,u_2,u_3$ in $[0,1]$,
\begin{eqnarray}
C_{12}(u_1,u_2)&=&\widetilde C(u_1,u_2,1), \label{E:marg1}\\
C_{13}(u_1,u_3)&=&\widetilde C(u_1,1,u_3),\label{E:marg2}\\
C_{23}(u_2,u_3)&=&\widetilde C(1,u_2,u_3).\label{E:marg3}
\end{eqnarray} 
\end{defn}

In such a case, $C_{12}$, $C_{13}$ and $C_{23}$ are called the \textit{bivariate marginals} (briefly, $2$--marginals)
of $\widetilde C$.

Notice that $\Pi_2,\Pi_2,\Pi_2$ are compatible, because they are the $2$--marginals of $\Pi_3$. Analogously,
$M_2,M_2,M_2$ are compatible, because they are the $2$--marginals of $M_3$. The copulas $W_2,W_2,W_2$, however,
are not compatible (see (Schweizer and Sklar, 1983)).

\begin{defn}
Let $C_{12}$, $C_{13}$ and $C_{23}$ be in $\C_{2}$ such that they are compatible. The \textit{Fr\'echet class}
of $(C_{12}, C_{13},C_{23})$, denoted by $\F(C_{12}, C_{13},C_{23})$, is the class of all $\widetilde C\in \C_3$ 
such that \eqref{E:marg1}, \eqref{E:marg2} and \eqref{E:marg3} hold. 
\end{defn}

\section{Two constructions of copulas}\label{S:constr}

In this section, we introduce two constructions of copulas that shall be useful in the sequel.

\begin{prop}\label{P:prod}
Let $A$ and $B$ be in $\C_2$ and let $\mathbf C=\{C_t\}_{t\in [0,1]}$ be a family in $\C_2$. Then the mapping 
$A\ast_{\mathbf C} B:[0,1]^2\to [0,1]$ defined by 
\begin{equation}\label{E:Cprod}
(A\ast_{\mathbf C} B)(u_1,u_2)=\int_{0}^{1} C_t\left(\frac{\partial}{\partial t} A(u_1,t),
\frac{\partial}{\partial t} B(t,u_2)\right) \d t
\end{equation}
is in $\C_2$. 
\end{prop}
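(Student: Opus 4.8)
The plan is to read the integrand through the probabilistic meaning of the partial derivatives and then verify the three copula axioms (C1)--(C3) directly, the $2$-increasing condition being the essential point. For fixed $u_1$ the map $t\mapsto A(u_1,t)$ is nondecreasing and $1$-Lipschitz, hence absolutely continuous, so $a(u_1,t):=\frac{\partial}{\partial t}A(u_1,t)$ exists for almost every $t$ and lies in $[0,1]$; the same holds for $b(t,u_2):=\frac{\partial}{\partial t}B(t,u_2)$. I would fix Borel-measurable versions of $a$ and $b$ such that, for almost every fixed $t$, the sections $u_1\mapsto a(u_1,t)$ and $u_2\mapsto b(t,u_2)$ are nondecreasing with $a(0,t)=b(t,0)=0$ and $a(1,t)=b(t,1)=1$; in other words each section is the distribution function of a probability measure on $[0,1]$ (the conditional distribution attached to $A$, respectively $B$). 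I will also assume the mild regularity that $(t,x,y)\mapsto C_t(x,y)$ is jointly measurable, so that the integrand in \eqref{E:Cprod} is a bounded measurable function of $t$ and the integral is well defined.

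For (C1) the idea is purely boundary-based: if $u_1=0$ then $a(0,t)=0$, so $C_t(a(0,t),b(t,u_2))=C_t(0,\cdot)=0$ by (C1) for $C_t$ and the integral vanishes, while the case $u_2=0$ is symmetric. For (C2) I would set $u_2=1$ and use $b(t,1)=\frac{\partial}{\partial t}B(t,1)=\frac{\partial}{\partial t}t=1$ for almost every $t$, so that $C_t(a(u_1,t),1)=a(u_1,t)$ by (C2) for $C_t$; the fundamental theorem of calculus for the absolutely continuous function $t\mapsto A(u_1,t)$ then gives
\[
(A\ast_{\mathbf C}B)(u_1,1)=\int_0^1 a(u_1,t)\,\d t=A(u_1,1)-A(u_1,0)=u_1,
\]
and the case $u_1=1$ is identical via $\int_0^1 b(t,u_2)\,\d t=u_2$.

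The heart of the argument, and the step I would carry out most carefully, is (C3). Fix a box $[u_1,v_1]\times[u_2,v_2]$ with $u_1\le v_1$ and $u_2\le v_2$. Because $A$ is $2$-increasing, $t\mapsto A(v_1,t)-A(u_1,t)$ is nondecreasing, so its derivative is nonnegative and $a(u_1,t)\le a(v_1,t)$ for almost every $t$; likewise $b(t,u_2)\le b(t,v_2)$ for almost every $t$. The key observation is then that, for almost every $t$, the four-term combination
\[
C_t(a(v_1,t),b(t,v_2))-C_t(a(v_1,t),b(t,u_2))-C_t(a(u_1,t),b(t,v_2))+C_t(a(u_1,t),b(t,u_2))
\]
equals the $C_t$-volume $V_{C_t}\bigl([a(u_1,t),a(v_1,t)]\times[b(t,u_2),b(t,v_2)]\bigr)$ of a genuine (possibly degenerate) rectangle in $[0,1]^2$, and is therefore $\ge 0$ by (C3) for $C_t$. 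Integrating in $t$ and using linearity yields $V_{A\ast_{\mathbf C}B}\bigl([u_1,v_1]\times[u_2,v_2]\bigr)\ge 0$, which is (C3) for $A\ast_{\mathbf C}B$.

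I expect the genuine obstacle to be measure-theoretic bookkeeping rather than the algebra: one must justify the almost-everywhere existence and a jointly measurable choice of the partial derivatives, check that the resulting sections are bona fide distribution functions with the stated boundary values, and ensure measurability of $t\mapsto C_t(a(u_1,t),b(t,u_2))$ so that \eqref{E:Cprod} is meaningful. Once these standard facts about copulas (existence of conditional distributions, $1$-Lipschitz continuity, absolute continuity of the one-variable sections) are in hand, the three axioms follow as above, the crucial point being that the (C3) integrand is nothing but a $C_t$-volume of a rectangle whose corners are monotone in the box endpoints.
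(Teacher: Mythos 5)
Your proof is correct, but it is organized differently from the paper's. The paper never verifies the copula axioms for $A\ast_{\mathbf C}B$ directly: instead it proves Proposition \ref{P:Cprod}, namely that the three-dimensional lifting $A\star_{\mathbf C}B$ of \eqref{E:C3prod} is a $3$--copula, and then observes that $A\ast_{\mathbf C}B$ is the $(1,3)$--marginal of $A\star_{\mathbf C}B$ (obtained by setting the middle argument equal to $1$), so that Proposition \ref{P:prod} follows ``as a byproduct'' because every bivariate marginal of a $3$--copula is a $2$--copula. Your key step in (C3) --- rewriting the four-term alternating sum as
\[
\int_{0}^{1} V_{C_t}\left(\left[\frac{\partial}{\partial t} A(u_1,t),\frac{\partial}{\partial t} A(v_1,t)\right]
\times\left[\frac{\partial}{\partial t} B(t,u_2),\frac{\partial}{\partial t} B(t,v_2)\right]\right)\d t
\]
and using that the corners are ordered because $t\mapsto A(v_1,t)-A(u_1,t)$ is nondecreasing --- is exactly the computation the paper performs for the lifting, except that your integral runs over all of $[0,1]$ rather than over $[u_2,v_2]$; your verification of (C2) via the fundamental theorem of calculus likewise mirrors the computation needed to identify the $2$--marginals of $A\star_{\mathbf C}B$. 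What the paper's route buys is economy: a single volume computation serves both Proposition \ref{P:prod} and Proposition \ref{P:Cprod}, both of which are needed later, and the boundary conditions for $A\ast_{\mathbf C}B$ come for free from the marginal identification. What your route buys is self-containedness: the bivariate statement is proved without constructing any trivariate object, and you are more explicit than the paper about the measure-theoretic hypotheses (joint measurability of $(t,x,y)\mapsto C_t(x,y)$, almost-everywhere existence and measurable versions of the partial derivatives), which the paper leaves entirely implicit.
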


For a family $\mathbf C=\{C_t\}_{t\in [0,1]}$ in $\C_2$, $A\ast_{\mathbf C} B$ is called the 
\textit{$\mathbf C$--product} of the copulas $A$ and $B$. 
Given $C\in \C_2$, if $C_t=C$ for every $t$ in $[0,1]$, then we shall write 
$A\ast_{\mathbf C} B=A\ast_C B$. Notice that, if $C_t=\Pi_2$
for every $t$ in $[0,1]$, then the operation $\ast_{\Pi_2}$ is the product for copulas studied in (Darsow et al., 1992). 

\begin{prop}\label{P:Cprod}
Let $A$ and $B$ be in $\C_2$ and let $\mathbf C=\{C_t\}_{t\in [0,1]}$ be a family in $\C_2$. Then the mapping 
$A\star_{\mathbf C} B:[0,1]^3\to [0,1]$ defined by 
\begin{equation}\label{E:C3prod}
(A\star_{\mathbf C} B)(u_1,u_2,u_3)=\int_{0}^{u_2}C_t\left(\frac{\partial}{\partial t} A(u_1,t),\frac{\partial}{\partial t} B(t,u_3)\right) \d t
\end{equation}
is in $\C_3$.
\end{prop}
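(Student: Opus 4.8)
The plan is to verify the three defining conditions (C1), (C2), (C3) directly for $D := A\star_{\mathbf C} B$, exploiting the fact that the variable $u_2$ enters only through the upper limit of integration. Throughout I would use the standard facts that, since $A$ and $B$ are copulas, the partial derivatives $t\mapsto \frac{\partial}{\partial t}A(u_1,t)$ and $t\mapsto \frac{\partial}{\partial t}B(t,u_3)$ exist for almost every $t$, take values in $[0,1]$, and are measurable (they are the conditional distributions associated with $A$ and $B$). The measurability of the integrand $t\mapsto C_t\bigl(\frac{\partial}{\partial t}A(u_1,t),\frac{\partial}{\partial t}B(t,u_3)\bigr)$ — hence the well-definedness of the integral — is exactly the one already guaranteed in Proposition \ref{P:prod}, so no new integrability issue arises. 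I would also record the identity $D(u_1,1,u_3)=(A\ast_{\mathbf C}B)(u_1,u_3)$, obtained by setting $u_2=1$, which ties the $(1,3)$-slice directly to Proposition \ref{P:prod}.

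Conditions (C1) and (C2) are then routine boundary computations. For (C1): if $u_2=0$ the integral runs over an empty interval; if $u_1=0$ then $\frac{\partial}{\partial t}A(0,t)=0$ and $C_t(0,\cdot)=0$; if $u_3=0$ then $\frac{\partial}{\partial t}B(t,0)=0$ and $C_t(\cdot,0)=0$ — so $D$ vanishes on the boundary. For (C2) I would use $A(1,t)=t$ and $B(t,1)=t$, so that $\frac{\partial}{\partial t}A(1,t)=\frac{\partial}{\partial t}B(t,1)=1$, together with the copula identities $C_t(s,1)=s$, $C_t(1,r)=r$ and $C_t(1,1)=1$. Since copulas are Lipschitz in each argument, the integrands are absolutely continuous primitives and the fundamental theorem of calculus applies, giving $D(u_1,1,1)=\int_0^1 \frac{\partial}{\partial t}A(u_1,t)\,\d t=u_1$, $D(1,u_2,1)=\int_0^{u_2}1\,\d t=u_2$, and $D(1,1,u_3)=\int_0^1 \frac{\partial}{\partial t}B(t,u_3)\,\d t=u_3$.

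The substantial step is (C3). For a $3$-box $R=[u_1,v_1]\times[u_2,v_2]\times[u_3,v_3]$ I would first collapse the eight vertices to four by summing over the $u_2$-coordinate: since $D(a,v_2,c)-D(a,u_2,c)=\int_{u_2}^{v_2} C_t\bigl(\frac{\partial}{\partial t}A(a,t),\frac{\partial}{\partial t}B(t,c)\bigr)\,\d t$, the volume rearranges to
\begin{equation*}
V_D(R)=\int_{u_2}^{v_2}\Bigl[\,C_t(p',q')-C_t(p',q)-C_t(p,q')+C_t(p,q)\,\Bigr]\,\d t,
\end{equation*}
where $p=\frac{\partial}{\partial t}A(u_1,t)$, $p'=\frac{\partial}{\partial t}A(v_1,t)$, $q=\frac{\partial}{\partial t}B(t,u_3)$ and $q'=\frac{\partial}{\partial t}B(t,v_3)$. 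For fixed $t$ the bracket is exactly the $C_t$-volume $V_{C_t}([p,p']\times[q,q'])$ in the sense of \eqref{E:n-in} with $n=2$. To conclude I need $p\le p'$ and $q\le q'$ for almost every $t$: this holds because $t\mapsto A(v_1,t)-A(u_1,t)$ is nondecreasing, its increment over $[s,t]$ being the nonnegative $A$-volume $V_A([u_1,v_1]\times[s,t])$, whence its a.e.\ derivative $p'-p$ is nonnegative; symmetrically $t\mapsto B(t,v_3)-B(t,u_3)$ is nondecreasing, giving $q'-q\ge 0$. Since each $C_t$ is $2$-increasing, every bracket is nonnegative, and integrating a nonnegative function over $[u_2,v_2]$ yields $V_D(R)\ge 0$.

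The main obstacle is precisely this monotonicity-of-the-conditional-marginals step inside (C3): one must argue that the partial derivatives are ordered, $p\le p'$ and $q\le q'$, so that the inner bracket is a genuine box volume rather than merely a signed sum, and this is exactly where the $2$-increasingness of $A$ and $B$ is decisive. Everything else reduces to the copula axioms for $A$, $B$ and the $C_t$, the fundamental theorem of calculus, and monotonicity of the integral.
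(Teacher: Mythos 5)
Your proof is correct and follows essentially the same route as the paper's: both reduce (C3) to the identity $V_{A\star_{\mathbf C}B}(R)=\int_{u_2}^{v_2}V_{C_t}\bigl(\bigl[\frac{\partial}{\partial t}A(u_1,t),\frac{\partial}{\partial t}A(v_1,t)\bigr]\times\bigl[\frac{\partial}{\partial t}B(t,u_3),\frac{\partial}{\partial t}B(t,v_3)\bigr]\bigr)\,\d t$ and then use the monotonicity of $t\mapsto A(v_1,t)-A(u_1,t)$ and $t\mapsto B(t,v_3)-B(t,u_3)$ to ensure the integrand is a genuine (nonnegative) $C_t$-box-volume. Your write-up is in fact slightly more careful than the paper's, which dismisses (C1)--(C2) as immediate and asserts the derivative inequalities for all $t$ rather than almost every $t$.
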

\begin{proof}
It is immediate that $A\star_{\mathbf C} B$ satisfies \textnormal{(C1)} and \textnormal{(C2)}. In order to prove \textnormal{(C3)} 
for $n=3$, let $u_i,v_i$ be in $[0,1]$ such that $u_i\le v_i$ for every $i\in \{1,2,3\}$. Since $A$ is $2$--increasing, we have that $A(v_1,t)-A(u_1,t)$ is increasing in $t\in [0,1]$, and, therefore, 
$\frac{\partial}{\partial t} A(v_1,t)\ge \frac{\partial}{\partial t} A(u_1,t)$ for all $t\in [0,1]$.
Analogously, $\frac{\partial}{\partial t} B(t,v_3)\ge \frac{\partial}{\partial t} B(t,u_3)$ for all $t\in [0,1]$.
Then, we have that
\begin{multline*}
V_{A\star_{\mathbf C} B}([u_1,v_1]\times[u_2,v_2]\times [u_3,v_3])\\
=\int_{u_2}^{v_2} V_{C_t}\left(\left[\frac{\partial}{\partial t} A(u_1,t),\frac{\partial}{\partial t} A(v_1,t)\right]
\times\left[\frac{\partial}{\partial t} B(t,u_3),\frac{\partial}{\partial t} B(t,v_3)\right]\right)\d t\ge 0,
\end{multline*}
which concludes the proof. 
\end{proof}

For a family $\mathbf C=\{C_t\}_{t\in [0,1]}$ in $\C_2$, $A\star_{\mathbf C} B$ is called the 
\textit{$\mathbf C$--lifting} of the copulas $A$ and $B$. 
Given $C\in \C_2$, if $C_t=C$ for every $t$ in $[0,1]$, we shall write 
$A\star_{\mathbf C} B=A\star_C B$. Notice that, if $C_t=\Pi_2$ for every $t$ in $[0,1]$, 
then the operation $\star_{\Pi_2}$ was considered in (Darsow et al., 1992) and (K\'olesarov\'a et al., 2006). 
Notice that the copula given by \eqref{E:C3prod} has an interpretation in terms of mixtures 
of conditional distributions (see section 4.5 of (Joe, 1997)). 
Moreover, we easily derive the following result, which, as a byproduct, also proves Proposition \ref{P:prod}.

\begin{prop}
Let $A$ and $B$ be in $\C_2$ and let $\mathbf C=\{C_t\}_{t\in [0,1]}$ be a family in $\C_2$. Then the $2$--marginals of
$A\star_{\mathbf C} B$ (that are $2$--copulas) are $A$, $A\ast_{\mathbf C} B$ and $B$, respectively.
\end{prop}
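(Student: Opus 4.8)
The plan is to obtain each of the three two-dimensional marginals of $A\star_{\mathbf C}B$ directly from its defining integral \eqref{E:C3prod}, by setting the appropriate variable equal to $1$ and then exploiting the boundary behaviour of the partial derivatives of a copula together with the fundamental theorem of calculus. Since $A\star_{\mathbf C}B\in\C_3$ by Proposition \ref{P:Cprod}, these marginals are automatically $2$--copulas, so it only remains to identify them.

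First I would record the two elementary facts that drive all three computations. For a copula, each section is $1$--Lipschitz, hence absolutely continuous, in the variable of differentiation; consequently $\int_{0}^{s}\frac{\partial}{\partial t}A(u_1,t)\,\d t=A(u_1,s)-A(u_1,0)=A(u_1,s)$, and likewise for $B(\cdot,u_3)$. Secondly, condition \textnormal{(C2)} gives $A(1,t)=t$ and $B(t,1)=t$, so that $\frac{\partial}{\partial t}A(1,t)=1$ and $\frac{\partial}{\partial t}B(t,1)=1$ for almost every $t\in[0,1]$.

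Then I would treat the three marginals in turn. For the $23$--marginal, setting $u_1=1$ makes the first argument of every $C_t$ equal to $1$; since $C_t(1,v)=v$ by \textnormal{(C2)}, the integrand reduces to $\frac{\partial}{\partial t}B(t,u_3)$, and integrating over $[0,u_2]$ yields $B(u_2,u_3)$. Symmetrically, for the $12$--marginal, setting $u_3=1$ makes the second argument of every $C_t$ equal to $1$; using $C_t(u,1)=u$, the integrand becomes $\frac{\partial}{\partial t}A(u_1,t)$, whose integral over $[0,u_2]$ is $A(u_1,u_2)$. Finally, the $13$--marginal is obtained by setting $u_2=1$, which only changes the upper limit of integration to $1$; the resulting expression coincides verbatim with the right-hand side of \eqref{E:Cprod} evaluated at $(u_1,u_3)$, that is, $(A\ast_{\mathbf C}B)(u_1,u_3)$.

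The computation is essentially bookkeeping, so the only genuine point requiring care---and the step I expect to be the main obstacle---is the measure-theoretic justification of the fundamental theorem of calculus, namely that the almost-everywhere partial derivative of a copula integrates back to the copula. I would dispatch this at the outset by invoking the $1$--Lipschitz continuity of each section, after which the three substitutions are immediate.
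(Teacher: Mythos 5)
Your proof is correct and is exactly the direct verification the paper intends: the paper states this proposition without proof (as ``easily derived''), and the intended argument is precisely your substitution of $u_1=1$, $u_3=1$ and $u_2=1$ into \eqref{E:C3prod}, using \textnormal{(C2)} for each $C_t$, the absolute continuity of the $1$--Lipschitz sections of $A$ and $B$ to integrate the a.e.\ partial derivatives back, and Proposition \ref{P:Cprod} to conclude that the resulting marginals (in particular $A\ast_{\mathbf C}B$, which recovers Proposition \ref{P:prod} as a byproduct) are $2$--copulas. Your explicit treatment of the fundamental-theorem-of-calculus step is the appropriate justification of the only nontrivial point.
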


Finally, we show a result that will be useful in next section, 
concerning the concordance order between two $3$--copulas generated by means of the
$\mathbf C$--lifting operation.

\begin{prop}\label{P:boundstar}
Let $\mathbf C=\{C_t\}_{t\in [0,1]}$ and $\mathbf C'=\{C_{t}'\}_{t\in [0,1]}$ be two families in $\C_2$. 
If $C_t\preceq C_{t}'$ for every $t$ in $[0,1]$, then, for all $A$ and $B$ in $\C_2$, 
$A\star_{\mathbf C} B\preceq A\star_{\mathbf C'} B$.
\end{prop}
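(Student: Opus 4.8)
The plan is to prove the concordance order $A\star_{\mathbf C} B\preceq A\star_{\mathbf C'} B$ by establishing the two defining inequalities separately: first the pointwise inequality $D(\mathbf u)\le D'(\mathbf u)$ for the copulas themselves, and then the corresponding inequality $\overline D(\mathbf u)\le\overline{D'}(\mathbf u)$ for the survival copulas, where I write $D=A\star_{\mathbf C}B$ and $D'=A\star_{\mathbf C'}B$. The first inequality should be essentially immediate from the integral representation \eqref{E:C3prod}: since $C_t\preceq C_t'$ means $C_t(x,y)\le C_t'(x,y)$ pointwise for every $t$, and the integrand is evaluated at the same arguments $\frac{\partial}{\partial t}A(u_1,t)$ and $\frac{\partial}{\partial t}B(t,u_3)$ in both cases, monotonicity of the integral over $[0,u_2]$ gives $D(\mathbf u)\le D'(\mathbf u)$ directly.

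**Next I would** handle the survival copula inequality, which is where the real work lies. Expanding $\overline D$ according to the definition in Section~\ref{S:first}, the only term that depends on the full three-dimensional copula (as opposed to its $2$--marginals) is $-D(u_1,u_2,u_3)$; the univariate and bivariate terms are built from the $2$--marginals of $D$, which by the preceding proposition are $A$, $A\ast_{\mathbf C}B$, and $B$. Two of these three marginals, namely $A$ and $B$, are \emph{identical} for $D$ and $D'$, so the corresponding terms in $\overline D$ and $\overline{D'}$ cancel. The difference $\overline{D'}(\mathbf u)-\overline D(\mathbf u)$ therefore reduces to a combination of the overlapping-marginal term $A\ast_{\mathbf C}B$ versus $A\ast_{\mathbf C'}B$ and the full-copula term $D$ versus $D'$.

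**The hard part will be** showing that this combination has the correct sign, because the full-copula term enters with a negative coefficient: we have $\overline{D'}-\overline D = \bigl(D'(u_1,u_2,1)-D(u_1,u_2,1)\bigr)-\bigl(D'(u_1,u_2,u_3)-D(u_1,u_2,u_3)\bigr)$, where $D(u_1,u_2,1)=(A\ast_{\mathbf C}B)(u_1,u_2)$ and similarly for $D'$. Since $D\le D'$ pointwise from the first part, the second bracket is nonnegative and appears with a minus sign, so naive monotonicity is not enough; one must exploit the specific structure. The cleanest route is to rewrite both differences as integrals of $C_t'(\cdot,\cdot)-C_t(\cdot,\cdot)$, obtaining
\begin{equation*}
\overline{D'}(\mathbf u)-\overline D(\mathbf u)=\int_{u_2}^{1}\bigl(C_t'-C_t\bigr)\!\left(\tfrac{\partial}{\partial t}A(u_1,t),\tfrac{\partial}{\partial t}B(t,u_3)\right)\d t,
\end{equation*}
where the integral over $[0,u_2]$ from the two full-copula terms cancels and only the tail $[u_2,1]$ survives. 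The hypothesis $C_t\preceq C_t'$ guarantees $C_t'-C_t\ge 0$ pointwise, so this integral is nonnegative, completing the argument. I would verify carefully that the argument-replacement at $u_3\mapsto 1$ in the bivariate terms is consistent with the integral form, since that bookkeeping of which integration limits and which arguments change is precisely the step where sign errors are easiest to make.
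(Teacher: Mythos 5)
Your proof is correct and takes essentially the same route as the paper's: the pointwise inequality is immediate from the integral representation, and the survival-copula inequality reduces, after the common marginals $A$ and $B$ cancel, to the identity $\overline{D'}(\mathbf u)-\overline{D}(\mathbf u)=\int_{u_2}^{1}\bigl(C_t'-C_t\bigr)\bigl(\tfrac{\partial}{\partial t}A(u_1,t),\tfrac{\partial}{\partial t}B(t,u_3)\bigr)\,\d t\ge 0$, which is exactly the paper's argument. The only blemish is the slot-indexing slip you yourself anticipated: the marginal that differs between $D$ and $D'$ is the $(1,3)$-marginal $D(u_1,1,u_3)=(A\ast_{\mathbf C}B)(u_1,u_3)$, not $D(u_1,u_2,1)$ (which equals $A(u_1,u_2)$ and cancels); your displayed integral identity is nonetheless the correct one.
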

\begin{proof}
It is immediate that $C_t\preceq C_{t}'$, for every $t\in [0,1]$, implies 
$A\star_{\mathbf C} B\le A\star_{\mathbf C'} B$ in the pointwise order.
Thus, we have only to prove that $\overline{A\star_{\mathbf C} B}\le \overline{A\star_{\mathbf C'} B}$. 
To this end, notice that 
\begin{eqnarray*}
&& (A\ast_{\mathbf C} B)(u_1,u_2,1)=(A\ast_{\mathbf C'} B)(u_1,u_2,1)=A(u_1,u_2),\\
&& (A\ast_{\mathbf C} B)(1,u_2,u_3)=(A\ast_{\mathbf C'} B)(1,u_2,u_3)=B(u_2,u_3).
\end{eqnarray*}
Therefore $\overline{A\star_{\mathbf C} B}(u_1,u_2,u_3)\le \overline{A\star_{\mathbf C'} B}(u_1,u_2,u_3)$ 
if, and only if,
\[
(A\ast_{\mathbf C} B)(u_1,u_3)-(A\star_{\mathbf C} B)(u_1,u_2,u_3)\le (A\ast_{\mathbf C'} B)(u_1,u_3)-
(A\star_{\mathbf C'} B)(u_1,u_2,u_3),
\]
which, in turn, is equivalent to
\[
\int_{u_2}^{1}C_t\left(\frac{\partial}{\partial t} A(u_1,t),\frac{\partial}{\partial t} B(t,u_3)\right) \d t\le 
\int_{u_2}^{1}C_{t}'\left(\frac{\partial}{\partial t} A(u_1,t),\frac{\partial}{\partial t} B(t,u_3)\right) \d t,
\]
and this is obviously true since $C_t\preceq C_{t}'$ for every $t\in [0,1]$.
\end{proof}

Notice that the latter results are interesting in their own right. Specifically, they allow us to construct
families of bivariate and trivariate copulas starting with known bivariate copulas (see (Durante et al., 2007) for details).

In the case of distribution functions with given densities, similar constructions were originally proposed by Joe (1996), 
and later developed in detail by Bedford and Cooke (2001, 2002), Aas et al. (2007) and Berg and Aas (2007). These constructions, 
which are formulated in the multivariate case, are based on a decomposition of a multivariate $d$--dimensional 
density $(d\ge 3)$ into $\tfrac{d(d - 1)}{2}$ bivariate copula densities.

\section{Compatibility of bivariate copulas}\label{S:Comp}

In order to determine conditions under which three $2$--copulas are compatible,
we start by characterizing the class $\C(C_{12},C_{23})$ of all $2$--copulas $C_{13}$ that are compatible with $C_{12}$ and $C_{23}$.

\begin{thm}\label{T:2fixcomp} 
Let $C_{12}$ and $C_{23}$ be in $\C_2$. A $2$--copula $C_{13}$ is in $\C(C_{12},C_{23})$ if, and only if, 
there exists a family $\mathbf C=\{C_t\}_{t\in [0,1]}$ in $\C_2$ such that 
\begin{equation}\label{E:bivmarg}
C_{13}=C_{12}\ast_{\mathbf C} C_{23}. 
\end{equation}
\end{thm}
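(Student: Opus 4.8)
The plan is to prove the two implications separately, using the lifting construction of Section \ref{S:constr} for the easy direction and a conditioning argument for the hard one. For the sufficiency (``if''), suppose $C_{13}=C_{12}\ast_{\mathbf C}C_{23}$ for some family $\mathbf C=\{C_t\}_{t\in[0,1]}$ in $\C_2$. I would simply set $\widetilde C:=C_{12}\star_{\mathbf C}C_{23}$. By Proposition \ref{P:Cprod} this is a member of $\C_3$, and by the subsequent identification of the $2$--marginals of a $\mathbf C$--lifting its three $2$--marginals are exactly $C_{12}$, $C_{12}\ast_{\mathbf C}C_{23}=C_{13}$ and $C_{23}$. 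Hence $C_{12},C_{13},C_{23}$ are compatible, i.e.\ $C_{13}\in\C(C_{12},C_{23})$. This direction is immediate and needs no further computation.

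For the necessity (``only if''), I would argue probabilistically. Since $C_{13}\in\C(C_{12},C_{23})$, there is a $\widetilde C\in\C_3$ realizing all three copulas as $2$--marginals; using the remark in Section \ref{S:first}, represent $\widetilde C$ as the \df\ of a random triple $(U_1,U_2,U_3)$ with each $U_i$ uniform on $[0,1]$ on some probability space $(\Omega,\mathcal A,P)$. Because $U_2$ is uniform, a regular conditional distribution of $(U_1,U_3)$ given $U_2=t$ exists for almost every $t\in[0,1]$, and its one-dimensional conditional \df's are
\[
P(U_1\le u_1\mid U_2=t)=\frac{\partial}{\partial t}C_{12}(u_1,t),\qquad
P(U_3\le u_3\mid U_2=t)=\frac{\partial}{\partial t}C_{23}(t,u_3),
\]
the partial derivatives existing for almost every $t$. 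Applying Sklar's Theorem to the conditional joint law of $(U_1,U_3)$ given $U_2=t$ yields a copula $C_t\in\C_2$ with
\[
P(U_1\le u_1,\,U_3\le u_3\mid U_2=t)=C_t\!\left(\frac{\partial}{\partial t}C_{12}(u_1,t),\frac{\partial}{\partial t}C_{23}(t,u_3)\right),
\]
and on the exceptional null set I would put $C_t:=\Pi_2$, which does not affect any integral. Integrating against the (uniform) law of $U_2$ and using the tower property gives
\[
C_{13}(u_1,u_3)=\int_0^1 P(U_1\le u_1,\,U_3\le u_3\mid U_2=t)\,\d t=(C_{12}\ast_{\mathbf C}C_{23})(u_1,u_3),
\]
which is the desired representation \eqref{E:bivmarg}.

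The main obstacle is the measurability and selection of the conditional copulas $C_t$. Sklar's Theorem pins down $C_t$ uniquely only on the closure of the range of the conditional marginals, so when these marginals fail to be continuous the extension to a genuine element of $\C_2$ is not canonical. I would handle this by invoking the existence of a jointly measurable regular conditional distribution and choosing a measurable version of the conditional copula (a measurable selection), ensuring that $t\mapsto C_t(x,y)$ is measurable so that the integral defining $C_{12}\ast_{\mathbf C}C_{23}$ is well posed; the value on the null set being irrelevant, the identity above then holds for all $(u_1,u_3)\in[0,1]^2$.
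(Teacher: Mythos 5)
Your proof is correct and follows essentially the same route as the paper: the sufficiency via $\widetilde C=C_{12}\star_{\mathbf C}C_{23}$ and Proposition \ref{P:Cprod}, and the necessity by conditioning on $U_2$, identifying the conditional \df's with $\frac{\partial}{\partial t}C_{12}(u_1,t)$ and $\frac{\partial}{\partial t}C_{23}(t,u_3)$, and applying Sklar's Theorem to the conditional law of $(U_1,U_3)$ given $U_2=t$. Your explicit treatment of the measurability and selection of the family $\{C_t\}$ is a careful addition that the paper leaves implicit, but it does not change the underlying argument.
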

\begin{proof}
If $C_{12}$, $C_{13}$ and $C_{23}$ are compatible, then there exists $\widetilde C\in \C_3$ 
such that \eqref{E:marg1}, \eqref{E:marg2} and \eqref{E:marg3} hold. Then there exist a probability space $(\Omega,\mathcal F,P)$ 
and a random vector $\mathbf U=(U_1,U_2,U_3)$, $U_i$ uniformly distributed on $[0,1]$ 
for each $i\in \{1,2,3\}$, such that, for all $u_1,u_2,u_3$ in $[0,1]$,
\begin{equation}\label{E:T1}
\widetilde C(u_1,u_2,u_3)=P(U_1\le u_1,U_2\le u_2,U_3\le u_3),
\end{equation}
and $C_{12}$ is the copula of $(U_1,U_2)$, $C_{13}$ is the copula of $(U_1,U_3)$ and $C_{23}$ is the copula of $(U_2,U_3)$. 
Then we have that
\begin{equation}\label{E:T2}
\widetilde C(u_1,u_2,u_3)=\int_{0}^{u_2} C_{t}(P(U_1\le u_1 \mid U_2=t),P(U_3\le u_3\mid U_2=t))\,\d t,
\end{equation}
where, for each $t\in [0,1]$, $C_{t}$ is the copula associated with the (conditional) distribution function of $(U_1,U_3)$ given $U_2=t$.
But, by simple calculations, we also obtain that, almost surely on $[0,1]$,
\[
P(U_1\le u_1\mid U_2=t)=\frac{\partial C_{12}(u_1,t)}{\partial t},\quad P(U_3\le u_3\mid U_2=t)=\frac{\partial C_{23}(t,u_3)}{\partial t}.
\]
Therefore we can rewrite \eqref{E:T2} in the form
\[
\widetilde C(u_1,u_2,u_3)=\int_{0}^{u_2}C_{t} \left( \frac{\partial }{\partial t}C_{12}(u_1,t),\frac{\partial }{\partial t}C_{23}(t,u_3)\right)\,\d t,
\]
and, hence, we obtain
\[
C_{13}(u_1,u_3)=\int_{0}^{1}C_{t} \left( \frac{\partial }{\partial t}C_{12}(u_1,t),\frac{\partial }{\partial t}C_{23}(t,u_3)\right)\,\d t,
\]
and therefore Eq. \eqref{E:bivmarg} holds.

In the other direction, suppose that there exists $\mathbf C=\{C_{t}\}_{t\in [0,1]}$ in $\C_2$ such that 
$C_{13}=C_{12}\ast_{\mathbf C} C_{23}$. From Proposition \ref{P:Cprod}, the function $\widetilde C$ given by
\[
\widetilde C(u_1,u_2,u_3)=(C_{12}\star_{\mathbf C} C_{23})(u_1,u_2,u_3)
\]
is a $3$--copula whose $2$--marginals are, respectively, $C_{12}$, $C_{13}$ and $C_{23}$, showing that
they are compatible.
\end{proof}

Note that the family $\mathbf C=\{C_t\}_{t\in [0,1]}$ is not completely arbitrary and depends, of course, 
on the copulas $C_{12}$ and $C_{23}$. 

\begin{cor}\label{C:2fixbounds}
For each $C_{13}$ in $\C(C_{12},C_{23})$ we have that
\begin{equation}\label{E:PAB-bounds}
C_{12}\ast_{W_2} C_{23}\preceq C_{13}\preceq C_{12} \ast_{M_2} C_{23},
\end{equation}
and these bounds are sharp.
\end{cor}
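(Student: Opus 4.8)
The plan is to combine Theorem~\ref{T:2fixcomp} with the concordance bounds for the $\mathbf C$--lifting (Proposition~\ref{P:boundstar}), exploiting the pointwise Fr\'echet bounds $W_2\preceq C_t\preceq M_2$ that hold for every $2$--copula. The key observation is that Theorem~\ref{T:2fixcomp} represents an arbitrary $C_{13}\in\C(C_{12},C_{23})$ as $C_{12}\ast_{\mathbf C}C_{23}$ for some family $\mathbf C=\{C_t\}_{t\in[0,1]}$, so it suffices to show how the extreme choices of the family produce the bounds.

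First I would record the trivial fact that, by \eqref{E:frechet} with $n=2$, every member $C_t$ of every family satisfies $W_2\preceq C_t\preceq M_2$ in the concordance order. Next, given $C_{13}=C_{12}\ast_{\mathbf C}C_{23}$, I would pass to the associated $3$--copulas $C_{12}\star_{\mathbf C}C_{23}$ and compare them with $C_{12}\star_{W_2}C_{23}$ and $C_{12}\star_{M_2}C_{23}$. Since $W_2\preceq C_t\preceq M_2$ for every $t$, Proposition~\ref{P:boundstar} yields
\begin{equation*}
C_{12}\star_{W_2}C_{23}\preceq C_{12}\star_{\mathbf C}C_{23}\preceq C_{12}\star_{M_2}C_{23}
\end{equation*}
in the concordance order on $\C_3$. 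I would then take the appropriate $2$--marginal: evaluating at $u_2=1$ (equivalently, recalling that the $(1,3)$--marginal of $A\star_{\mathbf C}B$ is exactly $A\ast_{\mathbf C}B$) reduces the pointwise comparison of the $3$--copulas to the pointwise comparison of the $2$--copulas $C_{12}\ast_{\mathbf C}C_{23}$, and hence delivers \eqref{E:PAB-bounds} directly from the definition of $\ast_{\mathbf C}$ in \eqref{E:Cprod} together with the monotonicity $C_t\mapsto C_t\!\left(\tfrac{\partial}{\partial t}C_{12}(u_1,t),\tfrac{\partial}{\partial t}C_{23}(t,u_3)\right)$ of the integrand.

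Finally I would establish sharpness. The point is that the families $C_t\equiv W_2$ and $C_t\equiv M_2$ are themselves admissible constant families, so $C_{12}\ast_{W_2}C_{23}$ and $C_{12}\ast_{M_2}C_{23}$ are, by Theorem~\ref{T:2fixcomp}, genuine elements of $\C(C_{12},C_{23})$; they attain the lower and upper bounds respectively, which proves the bounds cannot be improved.

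The main obstacle I anticipate is subtle: for the concordance order on $\C_3$ one must control both the copula and its survival copula $\overline{D}$, but Proposition~\ref{P:boundstar} already handles this, so the real care is only in reducing the three-dimensional statement to the two-dimensional one. Because the first coordinate is set to $u_2=1$ in the relevant marginal, the pointwise inequality for the $2$--copulas follows immediately from the integrand being monotone under $\preceq$; no survival-copula argument is actually needed at the level of $C_{13}$ itself, so the bulk of the work is bookkeeping rather than a genuine difficulty.
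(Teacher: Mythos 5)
Your proposal is correct and is essentially the argument the paper intends: the corollary is stated there without proof as an immediate consequence of Theorem \ref{T:2fixcomp}, namely the representation $C_{13}=C_{12}\ast_{\mathbf C}C_{23}$ combined with the pointwise Fr\'echet bounds $W_2\le C_t\le M_2$ applied inside the integral \eqref{E:Cprod}, with sharpness coming from the admissibility of the constant families $C_t\equiv W_2$ and $C_t\equiv M_2$. Your detour through the lifting $\star_{\mathbf C}$ and Proposition \ref{P:boundstar} is harmless but superfluous, since for $2$--copulas the order $\preceq$ is just the pointwise order, so \eqref{E:PAB-bounds} follows directly from the monotonicity of the integrand, exactly as you note at the end.
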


Therefore, we obtain the following characterization.

\begin{thm}\label{T:char}
Let $C_{12}$, $C_{13}$ and $C_{23}$ be in $\C_{2}$. $C_{12}$, $C_{13}$ and $C_{23}$ are compatible if, and only if,
there exist three families of $2$--copulas,
\[
\mathbf C_1=\{C^{1}_{t}\}_{t\in[0,1]},\quad \mathbf C_2=\{C^{2}_{t}\}_{t\in[0,1]},\quad \mathbf C_3=\{C^{3}_{t}\}_{t\in[0,1]},
\]
such that
\begin{equation}\label{E:comp}
C_{12}=C_{13}\ast_{\mathbf C_3} C_{32},\quad C_{13}=C_{12}\ast_{\mathbf C_2} C_{23},\quad C_{23}=C_{21}\ast_{\mathbf C_1} C_{13},
\end{equation}
where, for $1\le i<j\le 3$, $C_{ji}:=C_{ij}^t$.
\end{thm}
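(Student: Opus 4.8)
The plan is to derive the characterization in Theorem~\ref{T:char} directly from Theorem~\ref{T:2fixcomp}, exploiting the symmetry of the compatibility relation under permutations of the three indices. The key observation is that compatibility of $C_{12}$, $C_{13}$, $C_{23}$ is a symmetric condition: it asserts the existence of a single trivariate copula $\widetilde C$ whose three $2$--marginals are the given copulas. Each of the three equations in \eqref{E:comp} should then arise by applying Theorem~\ref{T:2fixcomp} after relabelling which pair of indices plays the role of the ``outer'' marginals $(C_{12},C_{23})$ and which plays the role of the ``middle-linked'' marginal $C_{13}$.

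First I would establish the forward direction. Assuming $C_{12}$, $C_{13}$, $C_{23}$ are compatible, there is $\widetilde C\in\C_3$ realizing \eqref{E:marg1}--\eqref{E:marg3}. For the middle equation $C_{13}=C_{12}\ast_{\mathbf C_2}C_{23}$, I apply Theorem~\ref{T:2fixcomp} verbatim: since $C_{13}\in\C(C_{12},C_{23})$, such a family $\mathbf C_2$ exists. For the other two equations, the idea is to permute the components of the random triplet $(U_1,U_2,U_3)$ so that the index conditioned upon in the integral representation \eqref{E:T2} changes. Concretely, to obtain $C_{23}=C_{21}\ast_{\mathbf C_1}C_{13}$ I would view the triplet as $(U_2,U_1,U_3)$, whose associated $3$--copula $\widetilde C^{(2,1,3)}$ has $2$--marginals $C_{21}=C_{12}^t$, $C_{23}$ and $C_{13}$; applying Theorem~\ref{T:2fixcomp} to this reordering yields $C_{23}\in\C(C_{21},C_{13})$ and hence the family $\mathbf C_1$. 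Similarly, conditioning on the third component (i.e.\ working with $(U_1,U_3,U_2)$ so that $U_3$ is the middle variable) produces $C_{12}=C_{13}\ast_{\mathbf C_3}C_{32}$, where $C_{32}=C_{23}^t$. In each case the relevant conditional distributions are partial derivatives of the appropriate bivariate marginals, exactly as in the computation following \eqref{E:T2}.

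For the converse, I would assume the three families $\mathbf C_1$, $\mathbf C_2$, $\mathbf C_3$ exist satisfying \eqref{E:comp}. Here it suffices to use only one of the three identities: from $C_{13}=C_{12}\ast_{\mathbf C_2}C_{23}$ and the ``if'' part of Theorem~\ref{T:2fixcomp}, we immediately conclude that $C_{12}$, $C_{13}$, $C_{23}$ are compatible, the explicit witness being $\widetilde C=C_{12}\star_{\mathbf C_2}C_{23}$ via Proposition~\ref{P:Cprod}. Thus the reverse implication is essentially free once the statement of Theorem~\ref{T:2fixcomp} is in hand.

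The main obstacle I anticipate is making the index-permutation argument in the forward direction fully rigorous, in particular verifying that the $2$--marginals of the permuted copula $\widetilde C^\sigma$ are indeed the claimed bivariate copulas (with the correct transposes $C_{ji}=C_{ij}^t$), and that the conditional-distribution computation identifying the partial derivatives goes through under each relabelling. This is conceptually routine given the probabilistic representation \eqref{E:T1}--\eqref{E:T2}, but one must track carefully which variable is integrated and which two are conditioned, so that the transposes appear in the right places. A subtlety worth flagging is that Theorem~\ref{T:2fixcomp} presupposes pairwise compatibility of the relevant copulas; since we start from a common $\widetilde C$, this pairwise compatibility is automatic, so no additional hypothesis is needed.
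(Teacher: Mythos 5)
Your proof is correct and takes essentially the same route as the paper: the paper observes that compatibility is equivalent to the three membership conditions $C_{12}\in\C(C_{13},C_{32})$, $C_{13}\in\C(C_{12},C_{23})$, $C_{23}\in\C(C_{21},C_{13})$ and then invokes ``a slight modification of the proof of Theorem \ref{T:2fixcomp}''; your permutation argument (applying Theorem \ref{T:2fixcomp} verbatim to $\widetilde C^{(1,3,2)}$ and $\widetilde C^{(2,1,3)}$, tracking the transposes $C_{ji}=C_{ij}^t$) is exactly that modification made explicit. Your remark that the converse needs only one of the three identities, with $\widetilde C=C_{12}\star_{\mathbf C_2}C_{23}$ as witness via Proposition \ref{P:Cprod}, likewise matches the paper's argument.
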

\begin{proof}
Notice that $C_{12}$, $C_{13}$ and $C_{23}$ are compatible if, and only if, $C_{12}\in \C(C_{13},C_{23})$, 
$C_{13}\in \C(C_{12},C_{23})$ and $C_{23}\in \C(C_{12},C_{13})$. Now, the assertion can be proved by means of 
a slight modification of the proof of Theorem \ref{T:2fixcomp}.
\end{proof}

In general, it is a difficult task to check whether a copula $C_{13}$ is compatible with two copulas $C_{12}$ and $C_{23}$.
However, Corollary \ref{C:2fixbounds} gives us some information: in fact, in order to prove that
$C_{13}\notin \C(C_{12},C_{23})$, it suffices to find a point $(u,v)$ in $[0,1]^2$ such that $C_{13}(u,v)$ violates 
\eqref{E:PAB-bounds}. 

\begin{exmp}
Let $C_{12}$ be the copula given by
\[
C_{12}(u_1,u_2)=u_1 u_2+u_1 u_2(1-u_1)(1-u_2),
\]
let $C_{23}$ be equal to the product copula $\Pi_2$, and let $C_{13}^{\alpha}$ be the Clayton copula given by
\[
C_{13}^{\alpha}(u_1,u_3)=(u_{1}^{-\alpha}+u_{3}^{-\alpha}-1)^{-1/\alpha}
\] 
for $\alpha\ge 0$. For a sufficiently large $\alpha$, the above three copulas are not compatible.
In fact, following Corollary \ref{C:2fixbounds}, we have that
\[
(C_{12}\ast_{M_2}C_{23})\left(\tfrac{1}{2},\tfrac{1}{2}\right)=\tfrac{7}{16},
\]
while $C_{13}^{\alpha}$ tends to $\tfrac{1}{2}$ when $\alpha$ tends to $+\infty$.
\end{exmp}

\smallskip
\begin{rem}
Theorem \ref{T:2fixcomp} was originally formulated by Dall'Aglio (1959), where also
Corollary \ref{C:2fixbounds} was presented (for the latter, see also (R\"uschendorf, 1991a)).
\end{rem}

\section{Fr\'echet class of three bivariate copulas}\label{S:fre}

Given three compatible $2$--copulas $C_{12}$, $C_{13}$ and $C_{23}$, we are now interested on 
the \textit{Fr\'echet class} $\F(C_{12},C_{13},C_{23})$ of all $3$--copulas whose $2$--marginals are, 
respectively, $C_{12}$, $C_{13}$ and $C_{23}$. As before, we first consider the class $\F(C_{12},C_{23})$
of all trivariate copulas whose $2$--marginals $C_{12}$ and $C_{23}$ are known.

\begin{thm}\label{T:2fix3cop} 
Let $C_{12}$ and $C_{23}$ be in $\C_2$. A $3$--copula $\widetilde C$ is in $\F(C_{12},C_{23})$ 
if, and only if, there exists a family $\mathbf C=\{C_t\}_{t\in [0,1]}$ in $\C_2$ such that 
\begin{equation}\label{E:32fix}
\widetilde C=(C_{12}\star_{\mathbf C} C_{23}).
\end{equation}
Moreover, for every $\widetilde C$ in $\F(C_{12},C_{23})$ and for all $u_1,u_2$ and $u_3$ in $[0,1]$, 
\begin{equation}\label{E:2fix3bounds}
(C_{12}\star_{W_2} C_{23})(u_1,u_2,u_3)\le \widetilde C(\mathbf u)\le (C_{12}\star_{M_2} C_{23})(u_1,u_2,u_3)
\end{equation}
and the bounds are sharp.
\end{thm}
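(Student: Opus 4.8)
The plan is to follow the architecture of the proof of Theorem \ref{T:2fixcomp}, upgrading from the single $C_{13}$--marginal to the whole trivariate copula, and then to extract the bounds directly from the Fr\'echet--Hoeffding inequalities.

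\emph{Characterization.} I would prove the two implications separately. The \emph{if} part is immediate from Section \ref{S:constr}: if $\widetilde C = C_{12}\star_{\mathbf C} C_{23}$ for some family $\mathbf C=\{C_t\}_{t\in[0,1]}$ in $\C_2$, then Proposition \ref{P:Cprod} gives $\widetilde C\in\C_3$, and the Proposition on the $2$--marginals of a $\mathbf C$--lifting shows that its $(1,2)$-- and $(2,3)$--marginals are exactly $C_{12}$ and $C_{23}$; hence $\widetilde C\in\F(C_{12},C_{23})$. For the \emph{only if} part I would reuse the probabilistic argument from Theorem \ref{T:2fixcomp} essentially verbatim: given $\widetilde C\in\F(C_{12},C_{23})$, choose a random vector $\mathbf U=(U_1,U_2,U_3)$ with uniform marginals realizing $\widetilde C$, let $C_t$ be the copula of the conditional law of $(U_1,U_3)$ given $U_2=t$, and disintegrate to obtain
\[
\widetilde C(u_1,u_2,u_3)=\int_0^{u_2} C_t\!\left(\frac{\partial}{\partial t}C_{12}(u_1,t),\frac{\partial}{\partial t}C_{23}(t,u_3)\right)\d t,
\]
which is precisely $(C_{12}\star_{\mathbf C} C_{23})(u_1,u_2,u_3)$. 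The only change from Theorem \ref{T:2fixcomp} is that there one sets $u_2=1$ to recover the $C_{13}$--marginal, whereas here $u_2$ is kept free, so the same computation delivers the entire $3$--copula.

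\emph{Bounds and sharpness.} The Fr\'echet--Hoeffding inequality \eqref{E:frechet} with $n=2$ gives $W_2\preceq C_t\preceq M_2$ for every $t\in[0,1]$. Writing $\widetilde C=C_{12}\star_{\mathbf C} C_{23}$ and comparing the family $\mathbf C$ with the constant families $\{W_2\}$ and $\{M_2\}$, two applications of Proposition \ref{P:boundstar} yield
\[
C_{12}\star_{W_2} C_{23}\preceq \widetilde C\preceq C_{12}\star_{M_2} C_{23},
\]
whose pointwise component is exactly \eqref{E:2fix3bounds}; alternatively, the pointwise inequality drops out at once by monotonicity of the integral in \eqref{E:C3prod}, since $W_2\le C_t\le M_2$ pointwise. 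Sharpness is then automatic: because $W_2$ and $M_2$ are themselves $2$--copulas, the constant choices $\mathbf C=\{W_2\}$ and $\mathbf C=\{M_2\}$ are admissible, so by the characterization just established both $C_{12}\star_{W_2} C_{23}$ and $C_{12}\star_{M_2} C_{23}$ lie in $\F(C_{12},C_{23})$ and attain the two bounds.

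The genuine obstacle is the disintegration step in the \emph{only if} direction, namely producing a measurable family $t\mapsto C_t$ of conditional copulas and justifying that the conditional distribution functions agree almost everywhere with the partial derivatives $\tfrac{\partial}{\partial t}C_{12}(u_1,t)$ and $\tfrac{\partial}{\partial t}C_{23}(t,u_3)$; this relies on the existence of regular conditional distributions and on the a.e.\ differentiability of copulas in each argument. Since this technical core has already been carried out in Theorem \ref{T:2fixcomp}, I would cite it rather than reprove it, reducing the present argument to the bookkeeping above together with the two elementary bound arguments.
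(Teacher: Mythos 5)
Your proposal is correct and follows essentially the same route as the paper, which proves this theorem simply by ``reconsidering the proof of Theorem \ref{T:2fixcomp}'' with $u_2$ kept free instead of set to $1$, exactly as you describe, with the bounds and their sharpness following from $W_2\preceq C_t\preceq M_2$ and the admissibility of the constant families $\{W_2\}$ and $\{M_2\}$. Your explicit appeal to Proposition \ref{P:boundstar} and to the proposition on the $2$--marginals of the $\mathbf C$--lifting is just a more detailed bookkeeping of the same argument.
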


The above theorem is simply obtained by reconsidering the proof of Theorem \ref{T:2fixcomp}. Notice that the bounds
\eqref{E:2fix3bounds} have also been obtained in (R\"uschendorf, 1991a)(Proposition 7) (see also (Joe, 1997)(Theorem 3.10)). 

Theorem \ref{T:2fix3cop} gives a powerful constructive way to determine all $3$--copulas with two given bivariate marginals. 
For example, if $C_{12}=C_{23}$, the copulas given by \eqref{E:32fix} are all possible trivariate copulas that can be used 
in the construction of Markov chains of second order (see section 8.1 in (Joe, 1997)).

Moreover, we can also easily derive that, if either $C_{12}$ or $C_{23}$ are shuffles of Min, then $\mathcal F(C_{12},C_{23})$
is formed just by one element (compare with (Durante et al., 2007), (Koles\'arov\'a et al., 2006)).

As a consequence of Theorem \ref{T:2fix3cop}, we can also state the following result.

\begin{thm}\label{T:char3cop}
Let $C_{12}$, $C_{13}$ and $C_{23}$ be three compatible $2$--copulas. 
A $3$--copula $\widetilde C$ is in $\F(C_{12},C_{13},C_{23})$ if, and only if, there exist three families of $2$--copulas,
\[
\mathbf C_1=\{C^{1}_{t}\}_{t\in[0,1]},\quad \mathbf C_2=\{C^{2}_{t}\}_{t\in[0,1]},\quad \mathbf C_3=\{C^{3}_{t}\}_{t\in[0,1]},
\]
such that
\begin{equation}\label{E:comp3cop}
\widetilde C= (C_{13}\star_{\mathbf C_3}C_{32})^{(1,3,2)}= C_{12}\star_{\mathbf C_2}C_{23}=(C_{21}\star_{\mathbf C_1}C_{13})^{(2,1,3)}.
\end{equation}
\end{thm}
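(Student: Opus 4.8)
The plan is to reduce this three-marginal characterization to three separate applications of Theorem \ref{T:2fix3cop}, one for each choice of the index that is rotated into the central (integration) slot of the $\star$-lifting. The backward implication then becomes an immediate marginal computation, while the forward implication invokes Theorem \ref{T:2fix3cop} on three permuted copies of $\widetilde C$.

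For the backward direction I would assume the three families exist and that $\widetilde C$ equals each displayed expression, and simply read off its three $2$-marginals. By the Proposition on the $2$-marginals of a $\star$-lifting, the middle expression $C_{12}\star_{\mathbf C_2}C_{23}$ already exhibits $C_{12}$ as its $(1,2)$-marginal and $C_{23}$ as its $(2,3)$-marginal. The remaining $(1,3)$-marginal I would extract from the first expression: since $(1,3,2)$ swaps the last two arguments, setting the second argument of $(C_{13}\star_{\mathbf C_3}C_{32})^{(1,3,2)}$ equal to $1$ reduces to the $(1,2)$-marginal of $C_{13}\star_{\mathbf C_3}C_{32}$, which is $C_{13}$. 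Hence $\widetilde C$ carries all three prescribed $2$-marginals and lies in $\F(C_{12},C_{13},C_{23})$.

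For the forward direction I would start from $\widetilde C\in\F(C_{12},C_{13},C_{23})$ and handle each index in turn. Since $\widetilde C$ has $(1,2)$-marginal $C_{12}$ and $(2,3)$-marginal $C_{23}$, it lies in $\F(C_{12},C_{23})$, so Theorem \ref{T:2fix3cop} furnishes a family $\mathbf C_2$ with $\widetilde C=C_{12}\star_{\mathbf C_2}C_{23}$. To put index $3$ in the central slot I would pass to $D:=\widetilde C^{(1,3,2)}$; computing $D(v_1,v_2,1)=\widetilde C(v_1,1,v_2)=C_{13}(v_1,v_2)$ and $D(1,v_2,v_3)=\widetilde C(1,v_3,v_2)=C_{32}(v_2,v_3)$ shows $D\in\F(C_{13},C_{32})$, so Theorem \ref{T:2fix3cop} yields $\mathbf C_3$ with $D=C_{13}\star_{\mathbf C_3}C_{32}$; undoing the involution $(1,3,2)$ gives $\widetilde C=(C_{13}\star_{\mathbf C_3}C_{32})^{(1,3,2)}$. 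The symmetric argument applied to $D':=\widetilde C^{(2,1,3)}$, whose relevant marginals work out to $C_{21}$ and $C_{13}$, produces $\mathbf C_1$ and the third equality $\widetilde C=(C_{21}\star_{\mathbf C_1}C_{13})^{(2,1,3)}$. Because all three families are extracted from the one copula $\widetilde C$, the three equalities hold simultaneously with no additional consistency requirement.

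The only delicate part will be the index bookkeeping in the two permuted cases: one must verify that rotating index $i$ into the middle sends the two marginals sharing that index to a pair in exactly the orientation demanded by Theorem \ref{T:2fix3cop}, with the transposes $C_{ji}=C_{ij}^t$ appearing precisely where the permutation forces them. Once these marginal identities are checked, both directions are immediate: Theorem \ref{T:2fix3cop} supplies all the structural content, and the permutations $(1,3,2)$ and $(2,1,3)$ are involutions, so no inverse permutations need separate handling.
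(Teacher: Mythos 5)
Your proposal is correct and follows exactly the route the paper intends: the paper states Theorem \ref{T:char3cop} as an immediate consequence of Theorem \ref{T:2fix3cop}, i.e., applying that theorem to $\widetilde C$ and to its permuted copies $\widetilde C^{(1,3,2)}$ and $\widetilde C^{(2,1,3)}$, which is precisely your argument. Your marginal bookkeeping (e.g., $\widetilde C^{(2,1,3)}\in\F(C_{21},C_{13})$ and the fact that $(1,3,2)$, $(2,1,3)$ are involutions) checks out, so you have merely made explicit the details the paper leaves implicit.
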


Now, we give pointwise lower and upper bounds for $\F(C_{12},C_{13},C_{23})$. 

\begin{thm}\label{T:bounds3cop}
For every $\widetilde C\in \F(C_{12},C_{13},C_{23})$ and for all $u_1,u_2,u_3$ in $[0,1]$, we have
\begin{equation}\label{E:bounds}
C_L(u_1,u_2,u_3)\le \widetilde C(u_1,u_2,u_3)\le C_U(u_1,u_2,u_3),
\end{equation}
where
\begin{eqnarray*}
C_L(u_1,u_2,u_3)= &&\max_{(i,j,k)\in \mathcal P}\{(C_{ij}\star_{W_2}C_{jk})(u_i,u_j,u_k),(C_{ij}\star_{M_2}C_{jk})(u_i,u_j,u_k)\\
&&+C_{ik}(u_i,u_k)-(C_{ij}\ast_{M_2}C_{jk})(u_i,u_k)\},
\end{eqnarray*}
\begin{eqnarray*}
C_U(u_1,u_2,u_3)= &&\min_{(i,j,k)\in \mathcal P}\{(C_{ij}\star_{M_2}C_{jk})(u_i,u_j,u_k),(C_{ij}\star_{W_2}C_{jk})(u_i,u_j,u_k)\\
&&+C_{ik}(u_i,u_k)-(C_{ij}\ast_{W_2}C_{jk})(u_i,u_k)\},
\end{eqnarray*}
and $\mathcal P=\{(1,2,3),(1,3,2),(2,1,3)\}$.
\end{thm}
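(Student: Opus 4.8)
The plan is to exploit the three representations of $\widetilde C$ furnished by Theorem \ref{T:char3cop}, one for each triple $(i,j,k)\in\mathcal P$; each corresponds to writing $\widetilde C$ as a $\mathbf C$-lifting whose integration variable is the middle index $j$. Concretely, for a fixed $(i,j,k)\in\mathcal P$ there is a family $\mathbf C=\{C_t\}_{t\in[0,1]}$ in $\C_2$ with
\[
\widetilde C(u_1,u_2,u_3)=(C_{ij}\star_{\mathbf C}C_{jk})(u_i,u_j,u_k)=\int_0^{u_j}C_t\Bigl(\tfrac{\partial}{\partial t}C_{ij}(u_i,t),\tfrac{\partial}{\partial t}C_{jk}(t,u_k)\Bigr)\,\d t,
\]
and, since the $(i,k)$-marginal of $\widetilde C$ is $C_{ik}$, the $\mathbf C$-product satisfies $(C_{ij}\ast_{\mathbf C}C_{jk})(u_i,u_k)=C_{ik}(u_i,u_k)$. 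I would show that, for each such triple, $\widetilde C(u_1,u_2,u_3)$ admits two lower and two upper estimates, and then take the extrema over $\mathcal P$.

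First I would record the \emph{direct} estimates. Since $W_2\le C_t\le M_2$ pointwise for every $t$, monotonicity of the integral (equivalently, the bounds \eqref{E:2fix3bounds} of Theorem \ref{T:2fix3cop}) gives
\[
(C_{ij}\star_{W_2}C_{jk})(u_i,u_j,u_k)\le \widetilde C(u_1,u_2,u_3)\le (C_{ij}\star_{M_2}C_{jk})(u_i,u_j,u_k),
\]
which supply the first argument of the maximum in $C_L$ and the first argument of the minimum in $C_U$.

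Next I would derive the \emph{complementary} estimates, which bring in the third marginal $C_{ik}$. Setting the middle argument equal to $1$ gives $(C_{ij}\ast_{\mathbf C}C_{jk})(u_i,u_k)=C_{ik}(u_i,u_k)$, so that
\[
C_{ik}(u_i,u_k)-\widetilde C(u_1,u_2,u_3)=\int_{u_j}^{1}C_t\Bigl(\tfrac{\partial}{\partial t}C_{ij}(u_i,t),\tfrac{\partial}{\partial t}C_{jk}(t,u_k)\Bigr)\,\d t.
\]
Bounding this integrand again between $W_2$ and $M_2$ and using the identity
\[
\int_{u_j}^{1}M_2(\cdots)\,\d t=(C_{ij}\ast_{M_2}C_{jk})(u_i,u_k)-(C_{ij}\star_{M_2}C_{jk})(u_i,u_j,u_k)
\]
together with its $W_2$-analogue, a rearrangement produces the second argument of the maximum in $C_L$ and the second argument of the minimum in $C_U$.

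Finally, for each $(i,j,k)$ I would combine the two lower estimates and the two upper estimates, and then take the maximum, respectively the minimum, over the three triples in $\mathcal P$; this yields $C_L\le\widetilde C\le C_U$. I expect the main obstacle to be purely organizational: one must track the coordinate permutation relating $\widetilde C(u_1,u_2,u_3)$ to $(C_{ij}\star_{\mathbf C}C_{jk})(u_i,u_j,u_k)$ for each triple, and verify with care the identity converting the tail integral $\int_{u_j}^{1}$ into the difference between the $\mathbf C$-product (at middle argument $1$) and the $\mathbf C$-lifting. Once this correspondence is in place, the pointwise comparison $W_2\le C_t\le M_2$ does all of the analytic work.
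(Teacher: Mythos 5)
Your proof is correct and follows essentially the same route as the paper: the three representations supplied by Theorem \ref{T:char3cop}, the direct bounds coming from $W_2\preceq C_t\preceq M_2$, and the complementary bounds involving $C_{ik}(u_i,u_k)-(C_{ij}\ast_{W_2}C_{jk})(u_i,u_k)$ and $C_{ik}(u_i,u_k)-(C_{ij}\ast_{M_2}C_{jk})(u_i,u_k)$. The only difference is packaging: the paper gets the complementary estimates by citing the concordance-order result (Proposition \ref{P:boundstar}) and unwinding the survival-copula inequality, whereas you derive them directly from the tail integral $\int_{u_j}^{1}C_t\,\d t$ --- which is precisely the comparison that the proof of Proposition \ref{P:boundstar} itself reduces to, so the substance is identical.
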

\begin{proof}
If $\widetilde C\in \F(C_{12},C_{13},C_{23})$, then, from Theorem \ref{T:char3cop}, 
there exist three families of $2$--copulas, such that $\widetilde C$ can be
expressed in the forms \eqref{E:comp3cop}.

Since $W_2\preceq C \preceq M_2$ for every $C\in \C_2$, Proposition \ref{P:boundstar}
ensures that, for each $(i,j,k)$ in $\mathcal P$,
\[
(C_{ij}\star_{W_2}C_{jk})^{(i,j,k)}\preceq \widetilde C\preceq (C_{ij}\star_{M_2}C_{jk})^{(i,j,k)}.
\]
Therefore, for each $(i,j,k)$ in $\mathcal P$ and $\mathbf u\in [0,1]^3$, we have that
\begin{equation}\label{E:bb1}
(C_{ij}\star_{W_2}C_{jk})(u_i,u_j,u_k)\le \widetilde C(\mathbf u)\le (C_{ij}\star_{M_2}C_{jk})(u_i,u_j,u_k).
\end{equation}
and
\begin{equation}\label{E:bb2}
\overline{(C_{ij}\star_{W_2}C_{jk})}(u_i,u_j,u_k)\le \overline{\widetilde C}(\mathbf u)\le 
\overline{(C_{ij}\star_{M_2}C_{jk})}(u_i,u_j,u_k).
\end{equation}

The left hand side of \eqref{E:bb2} is equivalent to:
\begin{eqnarray*}
&& 1-u_1-u_2-u_3+C_{ij}(u_i,u_j)+C_{jk}(u_j,u_k)+(C_{ij}\ast_{W_2}C_{jk})(u_i,u_k)\\
&& -(C_{ij}\star_{W_2}C_{jk})(u_i,u_j,u_k)\\
&\le& 1-u_1-u_2-u_3+C_{ij}(u_i,u_j)+C_{jk}(u_j,u_k)+C_{ik}(u_i,u_k)-\widetilde C(u_i,u_j,u_k).
\end{eqnarray*}

The right hand side of \eqref{E:bb2} is equivalent to:
\begin{eqnarray*}
&& 1-u_1-u_2-u_3+C_{ij}(u_i,u_j)+C_{jk}(u_j,u_k)+C_{ik}(u_i,u_k)-\widetilde C(u_i,u_j,u_k)\\
&\le& 1-u_1-u_2-u_3+C_{ij}(u_i,u_j)+C_{jk}(u_j,u_k)+(C_{ij}\ast_{M_2}C_{jk})(u_i,u_k)\\
&& -(C_{ij}\star_{M_2}C_{jk})(u_i,u_j,u_k).
\end{eqnarray*}

Easy calculations show that these inequalities are equivalent to:
\begin{eqnarray*}\label{E:bb3}
&&\widetilde C(\mathbf u)\le (C_{ij}\star_{W_2}C_{jk})(u_i,u_j,u_k)+C_{ik}(u_i,u_k)-(C_{ij}\ast_{W_2}C_{jk})(u_i,u_k),\\
&&\widetilde C(\mathbf u)\ge (C_{ij}\star_{M_2}C_{jk})(u_i,u_j,u_k)+C_{ik}(u_i,u_k)-(C_{ij}\ast_{M_2}C_{jk})(u_i,u_k)\label{E:bb4}.
\end{eqnarray*}
Using these inequalities and \eqref{E:bb1}, we directly get \eqref{E:bounds}.
\end{proof}

In Theorem 3.11 in (Joe, 1997), the author provided an upper bound $F_U$ and a lower bound $F_L$ for $\F(C_{12},C_{13},C_{23})$
given by
\begin{eqnarray}
&& F_U(u_1,u_2,u_3)= \min\{C_{12}(u_1,u_2),C_{13}(u_1,u_3),C_{23}(u_2,u_3), 1-u_1\nonumber\\
&& -u_2-u_3+C_{12}(u_1,u_2)+C_{13}(u_1,u_3)+C_{23}(u_2,u_3)\}\label{E:Joe1}\\
&& F_L(u_1,u_2,u_3)= \max\{0,C_{12}(u_1,u_2)+C_{13}(u_1,u_3)-u_1,C_{12}(u_1,u_2)\nonumber\\
&&+C_{23}(u_2,u_3)-u_2, C_{13}(u_1,u_3)+C_{23}(u_2,u_3)-u_3\}\label{E:Joe2}.
\end{eqnarray}
In the following result, we show that the bounds \eqref{E:bounds} improve the bounds given by \eqref{E:Joe1} and \eqref{E:Joe2}.

\begin{prop}
Let $C_{12}$, $C_{13}$ and $C_{23}$ be three compatible $2$--copulas. For every $\mathbf u\in [0,1]^3$, 
we have that $C_L(\mathbf u)\ge F_L(\mathbf u)$ and $C_U(\mathbf u)\le F_U(\mathbf u)$.
\end{prop}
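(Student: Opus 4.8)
The plan is to establish each of the two inequalities by a termwise comparison of the two extremal expressions. Since $F_U$ and $C_U$ are both minima over finite families, to obtain $C_U\le F_U$ it suffices to show that for \emph{each} of the four arguments of the minimum defining $F_U$ there is an argument of the minimum defining $C_U$ that does not exceed it; dually, since $F_L$ and $C_L$ are maxima, to obtain $C_L\ge F_L$ it suffices to exhibit, for each of the four arguments of $F_L$, an argument of $C_L$ that is at least as large. Thus the whole proof reduces to pairing the individual terms correctly and verifying one scalar inequality per pair.

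For the lower bound I would use only the first-type terms $(C_{ij}\star_{W_2}C_{jk})(u_i,u_j,u_k)$ of $C_L$, together with the elementary fact that any $3$-copula $E$ satisfies the two-dimensional Fr\'echet--Hoeffding lower bound on its $(i,k)$-section at a fixed middle level, namely $E(a,b,c)\ge E(a,b,1)+E(1,b,c)-b$ (this is just $2$-increasingness of $(a,c)\mapsto E(a,b,c)$ applied to the box $[a,1]\times[c,1]$, where $E(1,b,1)=b$). Recalling that the $(i,j)$- and $(j,k)$-marginals of $C_{ij}\star_{W_2}C_{jk}$ are $C_{ij}$ and $C_{jk}$, this bound converts each first-type term into a corresponding argument of $F_L$: the permutation $(2,1,3)$ (middle index $1$) yields $C_{12}(u_1,u_2)+C_{13}(u_1,u_3)-u_1$, the permutation $(1,2,3)$ (middle index $2$) yields $C_{12}(u_1,u_2)+C_{23}(u_2,u_3)-u_2$, and $(1,3,2)$ (middle index $3$) yields $C_{13}(u_1,u_3)+C_{23}(u_2,u_3)-u_3$; the remaining argument $0$ of $F_L$ is dominated because copula values are nonnegative. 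Each identification is a one-line marginal computation, so this direction is routine.

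For the upper bound, the first three arguments of $F_U$, namely $C_{12}$, $C_{13}$ and $C_{23}$, are handled symmetrically, using that every copula lies pointwise below each of its $2$-marginals: the first-type term $(C_{12}\star_{M_2}C_{23})(u_1,u_2,u_3)$ lies below both $C_{12}(u_1,u_2)$ (its $(1,2)$-marginal) and $C_{23}(u_2,u_3)$ (its $(2,3)$-marginal), while $(C_{13}\star_{M_2}C_{32})(u_1,u_3,u_2)$ lies below $C_{13}(u_1,u_3)$. The genuine obstacle is the fourth (``survival'') argument of $F_U$, namely $1-u_1-u_2-u_3+C_{12}(u_1,u_2)+C_{13}(u_1,u_3)+C_{23}(u_2,u_3)$; here I would pair it with the $(1,2,3)$ second-type term of $C_U$, that is $(C_{12}\star_{W_2}C_{23})(u_1,u_2,u_3)+C_{13}(u_1,u_3)-(C_{12}\ast_{W_2}C_{23})(u_1,u_3)$. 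Writing out $\overline{(C_{12}\star_{W_2}C_{23})}$ from the definition of the survival copula, and using that the $(1,3)$-marginal of $C_{12}\star_{W_2}C_{23}$ is exactly $C_{12}\ast_{W_2}C_{23}$, one cancels the common $C_{13}(u_1,u_3)$ and sees that the desired inequality is equivalent to $\overline{(C_{12}\star_{W_2}C_{23})}(u_1,u_2,u_3)\ge 0$, which holds because the survival copula is again a copula. Bundling these four comparisons gives $C_U\le F_U$, and the analogous bundle gives $C_L\ge F_L$. The only step demanding care is this survival-copula rearrangement for the fourth term; the remaining comparisons are immediate monotonicity and marginal identities.
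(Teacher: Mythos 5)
Your proposal is correct and follows essentially the same route as the paper's proof: the identical termwise pairing of the arguments of $C_L$, $C_U$ with those of $F_L$, $F_U$, and the identical key step for the fourth argument of $F_U$, namely that after cancelling $C_{13}(u_1,u_3)$ and using that the $(1,3)$-marginal of $C_{12}\star_{W_2}C_{23}$ is $C_{12}\ast_{W_2}C_{23}$, the inequality reduces to $\overline{(C_{12}\star_{W_2}C_{23})}(\mathbf u)\ge 0$. The only (immaterial) difference is that you verify the routine termwise comparisons through abstract copula properties --- coordinatewise monotonicity and $2$-increasingness of the middle section, combined with the marginal identities --- whereas the paper writes out the defining integrals and uses the pointwise bounds $W_2(a,b)\ge a+b-1$ and $M_2(a,b)\le\min(a,b)$ under the integral sign.
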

\begin{proof}
Let $\mathbf u$ be in $[0,1]^3$. We have that
\begin{eqnarray*}
C_L(\mathbf u)&\ge& (C_{13}\star_{W_2}C_{32})(u_1,u_3,u_2)\\
&=& \int_{0}^{u_3}W_2\left(\frac{\partial}{\partial t} 
C_{13}(u_1,t),\frac{\partial}{\partial t} C_{32}(t,u_2)\right) \d t\\
&\ge&  C_{13}(u_1,u_3)+C_{23}(u_2,u_3)-u_3,
\end{eqnarray*}
and, analogously,
\begin{eqnarray*}
C_L(\mathbf u)&\ge& C_{12}(u_1,u_2)+C_{13}(u_1,u_3)-u_1,\\
C_L(\mathbf u)&\ge& C_{12}(u_1,u_2)+C_{23}(u_2,u_3)-u_2.
\end{eqnarray*}
Therefore, since $C_L(\mathbf u)\ge 0$, it follows that $C_L(\mathbf u)\ge F_L(\mathbf u)$
for every $\mathbf u$ in $[0,1]^3$.

On the other hand, we have that
\begin{eqnarray*}
C_U(\mathbf u)&\le& (C_{13}\star_{M_2}C_{32})(u_1,u_3,u_2)\\
&=& \int_{0}^{u_3}\min\left(\frac{\partial}{\partial t} C_{13}(u_1,t),\frac{\partial}{\partial t} C_{32}(t,u_2)\right) \d t\\
&\le& \min(C_{13}(u_1,u_3),C_{23}(u_2,u_3)),
\end{eqnarray*}
and, analogously, $C_U(\mathbf u)\le C_{12}(u_1,u_2)$. Moreover, for every $\mathbf u\in [0,1]^3$, we have that
\begin{eqnarray*}
&& (C_{12}\star_{W_2}C_{23})(u_1,u_2,u_3)+C_{13}(u_1,u_3)-(C_{12}\ast_{W_2}C_{23})(u_1,u_3)\\
&&\le 1-u_1-u_2-u_3+C_{12}(u_1,u_2)+C_{13}(u_1,u_3)+C_{23}(u_2,u_3),
\end{eqnarray*}
as a consequence of the fact that $\overline{(C_{12}\star_{W_2}C_{23})}(\mathbf u)\ge 0$. Thus $C_U(\mathbf u)\le F_U(\mathbf u)$ 
for every $\mathbf u$ in $[0,1]^3$.
\end{proof}

\begin{exmp}
From Theorem \ref{T:bounds3cop}, if $\widetilde C$ is in $\F(\Pi_2,\Pi_2,\Pi_2)$, 
then, for every $u_1$, $u_2$ and $u_3$ in $[0,1]$, we have
\[
C_L(u_1,u_2,u_3)\le \widetilde C(u_1,u_2,u_3)\le C_U(u_1,u_2,u_3),
\]
where 
\begin{eqnarray}
C_L(u_1,u_2,u_3)&=& \max\{u_1 W_2(u_2,u_3), u_2 W_2(u_1,u_3), u_3 W_2(u_1,u_2)\},\\
C_U(u_1,u_2,u_3)&=& \min\{u_1 M_2(u_2,u_3), u_2 M_2(u_1,u_3), u_3 M_2(u_1,u_2)\}.
\end{eqnarray}
It is easy to check that, in this case, $C_L=F_L$ and $C_U=F_U$. These bounds were also obtained 
in (Rodr\'iguez-Lallena and \'Ubeda-Flores, 2004), by making different calculations (compare also 
with section 3.4.1 in (Joe, 1997)).
In particular, it was stressed in (Rodr\'iguez-Lallena and \'Ubeda-Flores, 2004) that $C_L$ and $C_U$ may not be copulas. 
\end{exmp} 

\section*{Acknowledgements}
The authors are grateful to Prof. C. Genest and Prof. R.B. Nelsen for their comments on a first
version of this manuscript. Moreover, the first author kindly acknowledges Prof. L. R\"uschendorf 
for fruitful discussions and for drawing our attention to previous results in this context. 
The third author acknowledges the support by the Ministerio de Educaci\'on y Ciencia (Spain) 
and FEDER, under research project MTM2006-12218.

\small{
}

\end{document}